\newtheorem{theorem}{Theorem}[section]
\newtheorem{lemma}[theorem]{Lemma}
\theoremstyle{definition}
\newtheorem{assumption}{Assumption}[section]
\newtheorem{definition}{Definition}[section]
\theoremstyle{remark}
\newtheorem{remark}{Remark}[section]
\newcommand\bR{\mathbb{R}}
\newcommand\D{\partial}
\begin{document}

\title[]{A Note On Degenerate Stochastic Integro-Differential Equations
}
\author{Konstantinos Dareiotis}
\begin{abstract}
In the present article, solvability in Sobolev spaces is investigated for a class of degenerate stochastic integro-differential equations of parabolic type. Existence and uniqueness is obtained, and estimates are given for the solution.
\end{abstract}
\maketitle

\section{Introduction}

 In the present paper, solvability in $L_2$ spaces of  stochastic partial integro-differential equations is investigated, under the lack of super-parabolicity. Equations of this type arise in non-linear filtering of jump-diffusion processes. 
 
 In the case of stochastic partial differential equations (SPDEs), the first results of solvability of the corresponding equations under the lack of super-parabolicity appeared in \cite{KR}. There the solution is also shown to belong in $L_p$ spaces. More recently, a gap from \cite{KR} is filled and the results are extended to systems of SPDEs in \cite{GGK}. While the present article was prepared for submission, \cite{J} appeared with similar results. However we decided to upload the article, since the results are obtained independently, the proof of the  main estimates is different, and we obtain $E\sup_{t\leq T}\|\cdot\|^2_m-$estimates for the solution, which allow us to conclude that the solution is weakly c\'adl\'ag in $H^m$ (see below for definitions).
 
 In the present paper we focus on estimating the terms appearing due to the integral operators and the jumps, and once the proper estimates are obtained, then existence, uniqueness, as well as estimates for the solution follow by the technique introduced in \cite{KR}. 
 
Finally let us conclude by introducing some notation that will be used through the paper, and finally state the exact problem that we are interested in. Let $T$ be a positive real number. We consider a filtered probability space $(\Omega, \mathscr{F}, (\mathscr{F}_t)_{t \leq T}, P)$, with the filtration satisfying the usual conditions. On this probability space we consider a sequence $(w^k_t)_{k=1}^\infty$ of independent $\mathscr{F}_t-$Wiener processes.  We also consider a measure space $(Z, \mathscr{Z}, \mu)$, where  $\mu$ is a L\'evy measure,  and a Poisson random measure $N(dz,dt)$ on $Z\times [0,T]$, defined on our probability space, such that $N(dz,dt)- \mu(dz)dt$ is a martingale measure. We will denote the progressive $\sigma-$field on $\Omega \times [0,T]$ by $\mathscr{P}$, and if $X$ is a topological space, $\mathscr{B}(X)$ will denote the Borel $\sigma-$field on $X$.  Let $d$ be a positive integer. For an integer $m\geq 0$, $H^m$ will denote the Sobolev space of functions in $L_2(\bR^d)$, having distributional derivatives of order $m$ in $L_2(\bR^d)$ and we will denote by $(\cdot, \cdot)_m$ and $\|\cdot\|_m$ the inner product and the norm respectively in $H^m$. If $X$ is a hilbert space, $H^m(X)$ will denote the corresponding space of $X-$valued functions on $\bR^d$.  We introduce also the following notation for function spaces
$\mathbb{H}^m:= L_2(\Omega, \mathscr{F}_0, H^m)$,  $\mathfrak{H}^m:= L_2(\Omega \times [0,T], \mathscr{P}; H^m)$, and if $X$ is a Hilbert space, then $\mathfrak{H}^m(X):= L_2(\Omega \times [0,T], \mathscr{P}; H^m(X))$.  For $i,j \in \{1,...,d\}$ we will use the notation $\D_i :=\D /  \D x_i$, $\D_{ij}:= \D_i \D_j$, and for a multi-index $\alpha = (\alpha_1,..., \alpha_d)$,  $\D^\alpha:= (\D^{|\alpha_1|}/ \D x_1)...  (\D^{|\alpha_d|}/ \D x_d)$. Also for foa a matrix $A$, we will denote the determinant by $detA$, and for a map $b: \bR^d \to \bR^d$, $Db$ will stand for the Jacobian. For  $i,j \in \{1,...,d\}$ and  $k\in \mathbb{N}$, we are given functions $a^{ij}, a^{i}, a, \sigma^{ik}, \sigma^k$  defined on $\Omega\times[0,T]\times \bR^d$ with values in $\bR$, that  are $\mathscr{P} \otimes \mathscr{B}(\bR^d)-$measurable. Also, we are given a  real-valued function $c=(c^1,...,c^d)$, defined on $\Omega\times [0,T]\times \bR^d \times Z$, with values in $\bR^d$, which is $\mathscr{P} \otimes \mathscr{B}(\bR^d)\otimes \mathscr{Z}-$measurable.  

On $[0,T] \times \bR^d$ we consider the following equation
$$
du_t=(L_tu_t+I_tu_t+f_t)dt+\sum_{k=1}^ \infty (M^k_tu_t+g^k_t)dw^k_t
$$
\begin{equation}                                    \label{eq:main equation}
+\int_Z (H_t(z)u_t +h_t(z)) \tilde{N}(dz,dt),
\end{equation}
with initial condition 
$
u_0= \psi,
$
where the operators are given by 
$$
 L_t v (x)= \sum_{i,j=1}^d a^{ij}_t(x)\D_{ij} v(x)+\sum_{i=1}^da^i_t(x)v(x)+a_t(x)v(x),
 $$
 $$
  \ M^k_tv(x)=\sum_{i=1}^d \sigma^{ik}_t (x)\D_iv(x)+\sigma^k_t(x) v(x),
 $$
 $$
  \ H_t(z)v(x)=v(x+c_t(z,x))-v(x),
 $$
 and
 $$
 I_tv(x)= \int_Z v(x+c_t(z,x))-v(x)- c_t(z,x)\cdot \nabla v(x) \mu(dz).
 $$  
  The structure of the article is as follows. In Section 2, we state the assuptions and the main result, in Section 3 we give the main estimates, and in Section 4 we use these estimates to prove our main theorem.
  \section{Main Results}
  In this section we state our assumption and the main results. Let $m \geq 1$ be an integer, and set $\mathfrak{m}:=\max(m,2)$. Let also $K>0$ be a constant.
 \begin{assumption}                     \label{as: regularity of a}
The functions $a^{ij}$ are $\mathfrak{m}$ times continuously  differentiable in $x$ and their derivatives up to order $\mathfrak{m}$ are bounded by $K$. The functions $a^i,a$ are $m$ times continuously  differentiable and their derivatives up to order $m$ are bounded by $K$. The functions $\sigma^{i}=(\sigma^{ik})_{k=1}^\infty$ and $\sigma=(\sigma^k)_{k=1}^\infty$ are $l_2-$valued, $m+1$ times continuously differentiable, and their derivatives up to order $m+1$ are bounded by $K$. 

 \end{assumption}
\begin{assumption}        \label{as:regularity of c}
The function $c$ is   $m+1$ times continuously differentiable in $x$,  and there exists a function $\bar{c}  \in L_2(Z, \mu(dz))$, such that 
for any multi-index $\alpha$, with $|\alpha| \leq m+1$, we have for all $\omega, t, x,$ and $z$,
$$
|\D^\alpha c_t(z,x)| \leq \bar{c}(z)\wedge K.
$$
Moreover, for all $(\omega,t,x,z,\theta) \in \Omega \times [0,T] \times\bR^d \times Z \times [0,1]$
$$
K^{-1} \leq |det( \mathbb{I}+\theta Dc_t(x,z))|.
$$
\end{assumption}
From now on, for $\theta\in [0,1]$,  will be using the notation $T_{t,\theta,z}(x):=x+\theta c_t(x,z)$. Notice that under Assumption \ref{as:regularity of c}, for fixed $\omega,t,\theta, z$, the map $T$ is a $C^{m+1}-$diffeomorphism on $\bR^d$, and we will denote its inverse by $J_{t,\theta,z}(x)$.
\begin{assumption}
$f \in \mathfrak{H}^{m}$, $g \in \mathfrak{H}^{m+1}(l_2)$, $h \in \mathfrak{H}^{m+1}(L_2(Z))$, and $\psi \in \mathbb{H}^m$. 
\end{assumption}
\begin{assumption}                         \label{as: parabolicity}
For all $\omega,t,x$ and all $\xi=(\xi_1, ..., \xi_d) \in \bR^d$
$$
\sum_{i,j=1}^d a^{ij}_t(x)\xi_i \xi_j -\frac{1}{2} \sum_{i,j=1}^d\sum_{k=1}^\infty \sigma^{ik}_t(x) \sigma^{jk}_t(x) \xi_i\xi_j \geq 0.
$$
\end{assumption}
\begin{definition}
 By a solution of equation \eqref{eq:main equation}, we mean a function $u \in \mathfrak{H}^1$ such that for each $\phi \in C_c^\infty(\bR^d)$, for $dP \times dt$ almost all $(\omega,t)\in \Omega \times [0,T]$ one has the equality
 $$
 (u_t,\phi)_0
 $$
 $$
 =(\psi,\phi)_0+\int_0^t -(\D_j\phi, a^{ij}_s \D_iu_s)_0+(\phi, a^i_s\D_iu_s-\D_ja^{ij}\D_iu_s+a_s u_s+f_s)_0 ds
 $$
 $$
 +\int_0^t \int_0^1 (\theta-1) 
\int_Z \int_{\bR^d} 
\D_iu_s(x+\theta c_s(x,z)) \D_j
( q^{ij}_s(x,z,\theta) \phi(x))\,dx \mu(dz)d\theta ds
 $$
 $$
 +\int_0^t (\phi, \sigma^{ik}\D_i u_s + \sigma^ku_s+g^k_s u_s)_0 dw^k_s+\int_0^t \int_Z (H_s(z)u_s+h_s(z), \phi)_0 \tilde{N}(dz,ds),
 $$
 where the summation is understood with respect to integer valued repeated indeces, and 
$q^{ij}_t(x,z, \theta):= \sum_{l=1}^d c^l_t(x,z)c^i_t( x,z)  \D_lJ^j_{\theta, t,z}(T_{\theta,t,z}(x))$.
\end{definition}
We will denote the expression of the right hand side of the definition of the solution by $R(t,a^{ij}, u,\phi)$, in order to ease the notation in the proof of our main theorem in Section 4. Also notice that we do not require the integrants in the jump term to be predictable since the compensator of the martingale is continuous.
\begin{remark}
As in \cite{KR}, if $u\in \mathfrak{H}^n$, for $n \geq1$, is a a solution of equation \eqref{eq:main equation}, then there exists a function $\bar{u}$, wich is  equal to $u$ for almost every $(\omega,t)$, is strongly c\'adl\'ag in $t$ as process with values in $H^{n-1}$, and the equality in the definition of solution holds almost surely, for all $t \in [0,T]$. This follows from the main theorem in \cite{G2}.
\end{remark}

From now on,  for an $l_2$ valued function $g$, with abuse of notation we will be  writing $\|g\|_m^2:= \sum_{k=1}^ \infty \|g^k\|^2_m$.
\begin{theorem}                                \label{thm: main theorem}
 Let $m \geq 1$ be an integer. Under Assumptions \ref{as: regularity of a} -\ref{as: parabolicity}, there exists a unique solution $u$ of equation \eqref{eq:main equation}. Moreover, $u$ is a strongly c\'adl\'ag processes with values in $H^{m-1}$, weakly c\'adl\'ag in  $H^{m}$, and the following estimate holds,
$$
 E\sup_{t\leq T} \|u_t\|^2_m 
 $$
 \begin{equation}                                \label{eq: estimate for the solution}
  \leq NE\|\psi\|^2_m+NE\int_0^T\left( \|f_t\|^2_m+\|g_t\|^2_{m+1}+\int_Z\|h_t(z)\|_{m+1}^2 \mu(dz) \right) dt,
\end{equation}
 where $N$ is a constant depending only on $m,d,T$ and $K$.
 \end{theorem}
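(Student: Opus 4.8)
The plan is to follow the vanishing-viscosity strategy of \cite{KR}: perturb the degenerate operator into a genuinely super-parabolic one, solve the resulting non-degenerate equation by the standard $L_2$-theory, derive energy estimates uniform in the perturbation parameter, and then pass to the limit. Concretely, for $\varepsilon>0$ I would replace $L_t$ by $L_t^\varepsilon:=L_t+\varepsilon\Delta$, so that $a^{ij}_t$ is replaced by $a^{ij}_t+\varepsilon\delta^{ij}$. Assumption \ref{as: parabolicity} then upgrades to the strict coercivity $\sum_{i,j}(a^{ij}_t+\varepsilon\delta^{ij})\xi_i\xi_j-\tfrac12\sum_{i,j,k}\sigma^{ik}_t\sigma^{jk}_t\xi_i\xi_j\ge \varepsilon|\xi|^2$, and for each fixed $\varepsilon$ the super-parabolic theory yields a unique solution $u^\varepsilon\in\mathfrak{H}^{m+1}$ with the expected regularity.

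The heart of the argument is an a priori estimate for $\|u^\varepsilon_t\|_m^2$ with a constant independent of $\varepsilon$. I would apply the It\^o formula for the square of the $H^m$-norm (in the form supplied by \cite{G2}) to $e^{-\lambda t}\|u^\varepsilon_t\|_m^2$ for a suitable $\lambda$, and organise the resulting terms by type. The local second-order part of the drift, after integration by parts, contributes $-2\sum_{|\alpha|\le m}(a^{ij}_t\D_i\D^\alpha u,\D_j\D^\alpha u)_0$ up to lower-order commutators, while the It\^o correction from the Wiener part contributes $\sum_k\|M^k_t u\|_m^2$, whose principal piece is $\sum_{|\alpha|\le m}\sum_k(\sigma^{ik}_t\D_i\D^\alpha u,\sigma^{jk}_t\D_j\D^\alpha u)_0$. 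By Assumption \ref{as: parabolicity} the sum of these two principal parts is non-positive, and the viscosity term only adds the favourable $-2\varepsilon\sum_{|\alpha|\le m}\|\nabla\D^\alpha u\|_0^2$; hence these dangerous second-order contributions are controlled uniformly in $\varepsilon$, and the commutators are bounded by $\|u\|_m^2$ using Assumption \ref{as: regularity of a}, feeding a Gronwall argument.

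The main obstacle is the treatment of the nonlocal drift $I_t$ together with the jump martingale $H_t(z)$, whose It\^o correction produces $\int_Z\|H_t(z)u^\varepsilon_t+h_t(z)\|_m^2\,\mu(dz)$; these must be estimated jointly, since neither is separately of lower order. Here I would use exactly the rewriting recorded in the definition of solution: the integral form of Taylor's theorem gives $v(x+c)-v(x)-c\cdot\nabla v(x)=\int_0^1(1-\theta)\,c^ic^j(\D_{ij}v)(x+\theta c)\,d\theta$, and an integration by parts in $x$ after the change of variables $y=T_{t,\theta,z}(x)$ (with inverse $J_{t,\theta,z}$) moves one derivative off $u$, producing the weight $(\theta-1)$ and the coefficients $q^{ij}_t(x,z,\theta)$ that absorb the Jacobian bookkeeping, so that only first derivatives of $u$ survive. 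The determinant lower bound in Assumption \ref{as:regularity of c} guarantees the change of variables is admissible and the transformed coefficients stay bounded, while $|\D^\alpha c|\le\bar c(z)\wedge K$ with $\bar c\in L_2(Z,\mu)$ makes the $\mu$-integrals converge. The delicate point, which is the content of Section 3, is to show that after these manipulations the principal part of the nonlocal drift cancels the principal part of the jump quadratic variation, leaving a bilinear form dominated by $\|u^\varepsilon_t\|_m^2$; this is where the bound must be uniform in $\varepsilon$, i.e. must not rely on strict parabolicity.

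With the uniform estimate in hand the conclusion is routine. I would extract a subsequence of $(u^\varepsilon)$ converging weakly in $\mathfrak{H}^m$ and weakly-$*$ in $L_2(\Omega;L_\infty([0,T];H^m))$, and pass to the limit in the linear weak formulation to identify the limit $u$ as a solution of \eqref{eq:main equation}; uniqueness follows by applying the same energy estimate to the difference of two solutions with vanishing data. Finally, invoking \cite{G2} gives a modification of $u$ that is strongly c\`adl\`ag in $H^{m-1}$ and for which the It\^o formula for $\|u_t\|_m^2$ holds for all $t$ almost surely; combining this with the Burkholder--Davis--Gundy inequality to control the suprema of the martingale terms upgrades the a priori bound to the $E\sup_{t\le T}$ estimate \eqref{eq: estimate for the solution}, and the uniform $H^m$-bound together with strong c\`adl\`agness in $H^{m-1}$ yields weak c\`adl\`agness in $H^{m}$.
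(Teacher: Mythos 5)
Your proposal follows essentially the same route as the paper: vanishing viscosity $a^{ij}+\varepsilon\delta_{ij}$, It\^o's formula for the $H^m$-norm, a joint estimate of the nonlocal drift together with the jump quadratic variation via Taylor's formula, change of variables and integration by parts (this is exactly the paper's $\mathscr{G}^\alpha$-lemma in Section~3), then weak compactness, Burkholder--Davis--Gundy for the suprema, and \cite{G2} for the c\`adl\`ag modification. The only step you omit is the paper's intermediate mollification of $a^{ij}$, needed so that $u^\varepsilon\in\mathfrak{H}^{m+2}$ and the $H^m$-level It\^o computation is legitimate, after which the smoothness is removed by a stability estimate from the non-degenerate theory; this is a routine technicality rather than a genuinely different approach.
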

\section{Auxiliary Results}
\begin{lemma}                                \label{lem: L1 estimate}
Suppose Assumption \ref{as:regularity of c} holds. There exists a constant $N:=N(m,d, \mu)$ such that for any $t, \omega$ and  $v\in W^2_1(\bR^d)$ we have
$$
\int_{\bR^d} \int_Z [v(x+c_t(x,z))-v(x)-c_t(x,z)\cdot \nabla v(x) ]\mu(dz) dx \leq N \|v\|_{L_1}.
$$
\end{lemma}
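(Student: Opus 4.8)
The plan is to recognise the integrand as a second-order Taylor remainder and to integrate by parts twice against the diffeomorphism $T_{t,\theta,z}$, so that \emph{both} derivatives are shifted off $v$ while the two ``missing'' powers of $c$ that render the integrand $\mu$-integrable are made explicit. First I would fix $(\omega,t)$ and reduce, by a density argument, to the case $v\in C_c^\infty(\bR^d)$: the asserted inequality and the intermediate identity below are continuous with respect to $W^2_1$-convergence, the right-hand side depending only on $\|v\|_{L_1}$. Using Taylor's formula in integral form,
\[
v(x+c_t(x,z))-v(x)-c_t(x,z)\cdot\nabla v(x)=\int_0^1(1-\theta)\,c^i_t(x,z)c^j_t(x,z)\,\D_{ij}v(T_{t,\theta,z}(x))\,d\theta,
\]
with summation over repeated $i,j$, it suffices to estimate, for fixed $z$ and $\theta\in[0,1]$, the quantity $\int_{\bR^d}c^i_t c^j_t\,\D_{ij}v(T_{t,\theta,z}(x))\,dx$.

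The core step is to convert the second derivatives of $v$, evaluated at $T:=T_{t,\theta,z}(x)$, into outer $x$-derivatives of $v\circ T$. Differentiating $v\circ T$ and inverting the Jacobian $DT=\mathbb{I}+\theta Dc_t$ (invertible by Assumption \ref{as:regularity of c}, with inverse $DJ_{t,\theta,z}$ evaluated at $T$) yields the chain-rule identities $\D_i v(T)=\D_iJ^n_{t,\theta,z}(T)\,\D_{x_n}[v(T)]$ and $\D_{ij}v(T)=\D_iJ^m_{t,\theta,z}(T)\,\D_{x_m}[\D_jv(T)]$ (summation over $m,n$). Substituting the second identity and integrating by parts once (no boundary terms, since $v\in C_c^\infty$) produces $-\int \D_j v(T)\,\D_{x_m}\!\big[c^i_tc^j_t\,\D_iJ^m_{t,\theta,z}(T)\big]\,dx$; inserting the first identity into the surviving first derivative and integrating by parts a second time gives
\[
\int_{\bR^d}c^i_t c^j_t\,\D_{ij}v(T)\,dx=\int_{\bR^d}v(T(x))\,B(x)\,dx,\qquad B:=\D_{x_n}\Big[\D_jJ^n_{t,\theta,z}(T)\,\D_{x_m}\big[c^i_tc^j_t\,\D_iJ^m_{t,\theta,z}(T)\big]\Big].
\]

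It then remains to bound $B$ and $\int|v(T)|\,dx$. Each term of $B$ carries exactly two factors that are either components of $c_t$ or spatial derivatives (of order $\le 2$) of such components; by Assumption \ref{as:regularity of c} each is bounded by $\bar{c}(z)\wedge K\le\bar c(z)$, so their product is at most $\bar c(z)^2$. The remaining factors are derivatives up to second order of $J_{t,\theta,z}$ composed with $T$, together with entries of $DT=\mathbb{I}+\theta Dc_t$; using $K^{-1}\le|\det(\mathbb{I}+\theta Dc_t)|$ and the boundedness of the derivatives of $c_t$, these are all bounded by a constant $N(d,K)$, whence $|B(x)|\le N(d,K)\,\bar c(z)^2$ uniformly in $x$. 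Since the substitution $y=T(x)$ has Jacobian $|\det DJ_{t,\theta,z}|=|\det(\mathbb{I}+\theta Dc_t)|^{-1}\le K$, we get $\int_{\bR^d}|v(T(x))|\,dx\le K\|v\|_{L_1}$, and therefore the quantity above is at most $N(d,K)\,\bar c(z)^2\,\|v\|_{L_1}$. Multiplying by $(1-\theta)\le 1$, integrating in $\theta$ and then in $z$ against $\mu$ (the interchange of integrals being justified by these same absolute bounds), and using that $\int_Z\bar c^2\,d\mu<\infty$ because $\bar c\in L_2(Z,\mu)$, yields the claim with $N$ of the asserted form, $K$ being a fixed constant.

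The main obstacle is the bookkeeping in the two integrations by parts: one must check that differentiating the coefficient $c^i_tc^j_t\,\D J_{t,\theta,z}(T)$ never lowers the number of $c$-factors below two, so that the full weight $\bar c(z)^2$—and hence $\mu$-integrability—survives, every extra derivative either landing on a $c$-component (which remains $O(\bar c)$) or on a $J$-factor (which stays bounded by $N(d,K)$).
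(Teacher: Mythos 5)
Your proof is correct and follows essentially the same route as the paper's: Taylor's integral remainder, two integrations by parts to shift both derivatives off $v$ onto coefficients built from $c$ and the inverse map $J_{t,\theta,z}$ (which is what produces the factor $\bar c(z)^2$), and then integration over $Z$ using $\bar c\in L_2(Z,\mu)$. The only cosmetic difference is that the paper performs the change of variables $y=T_{t,\theta,z}(x)$ \emph{before} integrating by parts (so the derivatives land on $q^{ij}=c^i(J)c^j(J)|\det DJ|$), whereas you invert the Jacobian pointwise via the chain rule and change variables only at the very end to bound $\int|v(T(x))|\,dx$.
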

\begin{proof}
By Taylor's theorem we have
$$
\bar{v}(x,z):=v(x+c_t(x,z))-v(x)-c_t(x,z)\cdot \nabla v(x)
$$
$$
=\sum_{i,j=1}^d\int_0^1 (1-\theta) \D_{ij}v(T_{\theta,t,z}(x))c^{i}_t(x,z)c^j_t(x,z) d\theta.
$$
Integrating over $\bR^d$ and changing variables gives
$$
\int_{\bR^d} \bar{v}(x,z) dx =\sum_{i,j=1}^d\int_0^1(1-\theta) \int_{\bR^d}
 \D_{ij}v(x)q^{ij}_t(\theta,x,z) dx d\theta,
$$
where $q^{ij}_t(\theta,x,z):=c^{i}_t(J_{\theta,t,z}(x),z)c^j_t(J_{\theta,t,z}(x),z) |det D J_{\theta,t,z}(x)|$.
By integration by parts and using Assumption \ref{as:regularity of c}, we obtain

$$
\int_{\bR^d} \bar{v}(x,z) dx \leq N |\bar{c}(z)|^2\|v\|_{L_1},
$$
Hence, by integrating over $Z$, we obtain the desired inequality.
\end{proof}

In the calculations later on, we  drop some arguments from the functions to ease the notation. We will be writing for example $u(x+c)$ instead of $u_t(x+c_t(x,z))$.

 For a real function  $u$ on $\Omega\times [0,T] \times \bR^d$, and functions $F: \mathbb{Z}^d \to \bR$, $m: \mathbb{Z}^d \to \mathbb{N}$ we will write $\mathscr{D}_n(u,F,m)$ for the collection of all functions $v$, such that 
 $$
 v= 
  \sum_{|\zeta| \leq n}  F(\zeta) D^\zeta u(x+c)
 $$
 \begin{equation}                                       \label{eq:comp form}
 \times  \sum_{k=1}^{m(\zeta)} \prod_{i=1}^n \left(\D^{\alpha^k_i(\zeta)} (x^{f^k_i(\zeta)}+c^{f^k_i(\zeta)}) \right)^{\delta^k_i(\zeta)}  \prod_{i=1}^n\left( \D^{\beta^k _i(\zeta)} c^{g^k_i(\zeta)} \right)^{{\delta'}^k_i(\zeta)},
 \end{equation}
 where $ \delta^k_i(\zeta), {{\delta'}^k_i(\zeta)}\in \{0,1\}$,   
 $f^k_i(\zeta), g^k_i(\zeta) \in \{0,...,d\}$,  $\alpha^k_i(\zeta)$, $\beta^k_i(\zeta)\in \mathbb{N}^d$,  and for each $\zeta$ and $k$, there exist $i,j \in \{0,n\}$, $i\neq j$ such that $ {{\delta'}^k_i(\zeta)}= {{\delta'}^k_j(\zeta)}=1$.
\begin{remark}                                      \label{rem: |v|, |v|2}
 If $v \in \mathscr{D}_n(u, F, m)$, there exists a constant $N$ depending only on $K,d,n,F,$ and $m$, such that if $z \in \{ \bar{c}(z) \leq 1\}$, then for any $\omega,t,x,$
 $$
  |v|^2 \leq N |\bar{c}(z)|\sum_{|\zeta |\leq n} |\D^\zeta u(x+c)|^2
 $$
 \begin{equation}                             
 |v| \leq N |\bar{c}(z)|^2\sum_{|\zeta |\leq n} |\D^\zeta u(x+c)|, \ \  |v|^2 \leq N |\bar{c}(z)|^2\sum_{|\zeta |\leq n} |\D^\zeta u(x+c)|^2
 \end{equation}
 \end{remark}
 \begin{lemma}                                 \label{lem: form composition}
 For any multi-intex $\alpha$ with  $n=|\alpha|\geq 1$ there exists functions $F,m$ depending only on $\alpha$ such that for any $u \in H^n$, 
 $$
 \D^\alpha (u(x+c))=\D^\alpha u(x+c) + \sum_{\rho=1}^d \sum_{\begin{subarray}{l}\beta \neq 0 \\ \beta \leq \alpha  \end{subarray}}  {\alpha \choose \beta } \D^\beta c^\rho \D^{\alpha- \beta} \D_\rho u(x+c)+u^{(\alpha)},
 $$
 where $u^{(\alpha)} \in \mathscr{D}_n(u,F,m)$.
 
 \end{lemma}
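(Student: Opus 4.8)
The plan is to argue by induction on $n=|\alpha|$, using the chain rule together with the multi-index Pascal identity ${\gamma \choose \beta}+{\gamma \choose \beta-e_s}={\gamma+e_s \choose \beta}$ to reconcile the binomial coefficients, and exploiting the structure of $\mathscr{D}_n$ to absorb all the higher-order remainder terms. Here $e_s$ denotes the $s$-th coordinate unit multi-index. For the base case $n=1$, write $\alpha=e_i$; the chain rule gives $\D_i(u(x+c))=\sum_{\rho=1}^d \D_\rho u(x+c)(\delta_{i\rho}+\D_i c^\rho)$, which is exactly $\D^\alpha u(x+c)+\sum_\rho \D_i c^\rho\,\D_\rho u(x+c)$ with $u^{(\alpha)}=0$, and the first-order sum matches the claim since $\beta=e_i$ is the only admissible index.

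For the inductive step, assume the formula for all multi-indices of order $n$ and take $|\alpha|=n+1$, writing $\alpha=\gamma+e_s$ with $|\gamma|=n$. I would apply the inductive hypothesis to $\D^\gamma(u(x+c))$ and then differentiate the three resulting groups of terms by $\D_s$. Differentiating the leading term $\D^\gamma u(x+c)$ produces $\D^\alpha u(x+c)$ (the new leading term) plus $\sum_\rho \D_s c^\rho\,\D^\gamma\D_\rho u(x+c)$, the $\beta=e_s$ first-order contribution. Differentiating the first-order sum $\sum_\rho \sum_{0\neq\beta\leq\gamma}{\gamma \choose \beta}\D^\beta c^\rho\,\D^{\gamma-\beta}\D_\rho u(x+c)$ by Leibniz yields three pieces: the derivative falling on $\D^\beta c^\rho$ gives a first-order term with $c$-index $\beta+e_s$; the derivative falling on the composition and landing on the identity part reproduces a first-order term with $c$-index $\beta$ and $u$-index raised by $e_s$; and the derivative falling on the composition and landing on a $c$-component produces a term carrying two factors of $\D c$, which belongs to the remainder class. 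Collecting the two genuinely first-order contributions and invoking Pascal's rule shows that the total coefficient of $\D^\beta c^\rho\,\D^{\alpha-\beta}\D_\rho u(x+c)$ equals ${\alpha \choose \beta}$ for every admissible $\beta$, as required.

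It then remains to show that all the leftover terms assemble into a single element of $\mathscr{D}_{n+1}(u,F,m)$ for suitable $F,m$; since the statement asserts only the existence of such $F,m$, I need not track them explicitly. The new two-factor term from the first-order sum manifestly carries two derivatives of $c$ and so lies in $\mathscr{D}_{n+1}$. The main obstacle, and the genuine content of the lemma, is the closure property that applying $\D_s$ to a generic element \eqref{eq:comp form} of $\mathscr{D}_n$ yields an element of $\mathscr{D}_{n+1}$ (so that in particular $\D_s u^{(\gamma)}$ stays in the class). I would verify this by inspecting the three ways $\D_s$ can act on \eqref{eq:comp form}: on $\D^\zeta u(x+c)$ it raises $|\zeta|$ by one and attaches a new factor $\D^{e_s}(x^{\rho'}+c^{\rho'})$ of the admissible $(x^f+c^f)$-type; on a factor $\D^{\alpha^k_i}(x^f+c^f)$ or $\D^{\beta^k_i}c^g$ it merely raises the order of the corresponding derivative. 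In every case the two distinguished factors carrying ${\delta'}^k_i={\delta'}^k_j=1$ survive, at worst with their orders increased, so the defining constraint that at least two factors be derivatives of $c$ is preserved. This closure closes the induction and completes the proof.
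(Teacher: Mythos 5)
Your proof is correct and follows essentially the same route as the paper: induction on $|\alpha|$, splitting $\D_s$ of the leading term and of the first-order sum via Leibniz, using the Pascal identity ${\gamma \choose \beta}+{\gamma \choose \beta-e_s}={\gamma+e_s \choose \beta}$ (together with ${\gamma\choose\beta}={\alpha\choose\beta}$ when $\beta_s=0$) to recover the coefficients ${\alpha\choose\beta}$, and absorbing everything carrying two derivatives of $c$ into the class $\mathscr{D}_{n+1}$. Your explicit verification that $\mathscr{D}_n$ is mapped into $\mathscr{D}_{n+1}$ under differentiation is a point the paper merely asserts, so if anything your write-up is slightly more complete.
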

 \begin{proof}
 It is easy to see that this holds for any multi-index $\alpha$ with $1\leq |\alpha| \leq 2$. We proceed by induction and  we assume that the equality holds for some $\alpha$ with $|\alpha| \geq 2$ . For any multi-index $\gamma$, we  set $\tilde{\gamma}=\gamma+ e_i$ for some fixed $i \in \{1,...,d\}$, where $e_i$ is the unit vector in the $ i-$th direction. It is easy to see that  $\D_i u^{(\alpha)} \in  \mathscr{D}_{n+1} (u,F',m')$, where the functions $F', m'$ depend only on $\tilde{\alpha}$.
 Hence it suffices to show that there exist functions $F'',m''$ depending only on $\tilde{\alpha}$, and $v \in \mathscr{D}_{n+1}(u,F'',m'')$ such that
 $$
A:=\D_i  \left(\D^\alpha u(x+c) + \sum_{\rho=1}^d \sum_{\begin{subarray}{l}\beta \neq 0 \\ \beta \leq \alpha  \end{subarray}}  {\alpha \choose \beta } \D^\beta c^\rho \D^{\alpha- \beta} \D_\rho u(x+c)\right)
 $$ 
 \begin{equation}                      \label{eq: desired eq}
= \D^{\tilde{\alpha}} u(x+c) + \sum_{\rho=1}^d \sum_{\begin{subarray}{l}\beta \neq 0 \\ \beta \leq \tilde{\alpha} \end{subarray}}  {\tilde{\alpha} \choose \beta } \D^\beta c^\rho \D^{\tilde{\alpha}- \beta} \D_\rho u(x+c)+v.
 \end{equation}
 We have 
 \begin{equation}                           \label{eq: B first}
 B:=\D_i  \left(\D^\alpha u(x+c)\right)=\D^{\tilde{\alpha}} u(x+c)+\sum_{\rho=1}^d \D_\rho \D^{\alpha} u(x+c) \D_i c^\rho.
 \end{equation}
 We also have 
 $$
C:=\D_i\left( \sum_{\rho=1}^d \sum_{\begin{subarray}{l}\beta \neq 0 \\ \beta \leq \alpha \end{subarray}} { \alpha \choose \beta } \D^\beta c^\rho \D^{\alpha- \beta} \D_\rho u(x+c)\right)
 $$
 $$
 =\sum_{\rho=1}^d \sum_{\begin{subarray}{l}\beta \neq 0 \\ \beta \leq \alpha \end{subarray}} { \alpha \choose \beta } \D^{\tilde{\beta}} c^\rho \D^{\alpha- \beta} \D_\rho u(x+c)
$$
$$
 +
 \sum_{\rho=1}^d \sum_{\begin{subarray}{l}\beta \neq 0 \\ \beta \leq \alpha \end{subarray}} { \alpha \choose \beta } \D^\beta c^\rho \sum_{k=1}^d \D_k \D^{\alpha- \beta} \D_\rho u(x+c) \D_i(x^k+c^k)
 $$
 $$
 =\sum_{\rho=1}^d \sum_{\begin{subarray}{l}\beta \neq 0 \\ \beta \leq \alpha \end{subarray}} { \alpha \choose \beta } \D^{\tilde{\beta}} c^\rho \D^{\alpha- \beta} \D_\rho u(x+c)
 $$
 $$
 +\sum_{\rho=1}^d \sum_{\begin{subarray}{l}\beta \neq 0 \\ \beta \leq \alpha \end{subarray}} { \alpha \choose \beta } \D^\beta c^\rho \D_\rho \D^{\tilde{\alpha}- \beta}  u(x+c)+v, 
$$
where

$$
v= \sum_{\rho=1}^d \sum_{\begin{subarray}{l}\beta \neq 0 \\ \beta \leq \alpha \end{subarray}} { \alpha \choose \beta } \D^\beta c^\rho \sum_{k=1}^d \D_k \D^{\alpha- \beta} \D_\rho u(x+c) \D_i c^k,
$$
and  is in the class $\mathscr{D}_{n+1}(u,\bar{F}, \bar{m})$, for some functions   $\bar{F}, \bar{m}$ depending only on $\tilde{\alpha}$.
Also 
$$
\sum_{\rho=1}^d \sum_{\begin{subarray}{l}\beta \neq 0 \\ \beta \leq \alpha \end{subarray}} { \alpha \choose \beta } \D^{\tilde{\beta}} c^\rho \D^{\alpha- \beta} \D_\rho u(x+c)
$$
 $$
 =\sum_{\rho=1}^d \sum_{\begin{subarray}{l} 2 \leq |\beta| \\ e_i \leq \beta \leq \tilde{\alpha} \end{subarray}} { \alpha \choose \beta- e_i } \D^\beta c^\rho \D^{\tilde{\alpha}- \beta} \D_\rho u(x+c)
 $$
 Hence 
 $$
C= \sum_{\rho=1}^d \sum_{\begin{subarray}{l} 2 \leq |\beta| \\ e_i \leq \beta \leq \alpha \end{subarray}}\Big[ { \alpha \choose \beta- e_i }+ { \alpha \choose \beta} \Big]\D^\beta c^\rho \D^{\tilde{\alpha}- \beta} \D_\rho u(x+c)
 $$
 $$
 +\sum_{\rho=1}^d \D^{\tilde{\alpha}}c^\rho \D_\rho u(x+c)+\sum_{\rho=1}^d \sum_{\begin{subarray}{l}\beta \neq 0 \\ \beta \leq \alpha \\ \beta_i=0 \end{subarray}} { \alpha \choose \beta } \D^\beta c^\rho \D_\rho \D^{\tilde{\alpha}- \beta}  u(x+c) 
 $$
 $$
 +\sum_{\rho=1}^d { \alpha \choose e_i } \D_i c^\rho \D^\alpha \D_\rho u(x+c) +v
 $$
 Notice that 
 $$
  { \alpha \choose \beta- e_i }+ { \alpha \choose \beta}=   { \tilde{\alpha} \choose \beta},
  $$
  and also, if $\beta_i=0$, then 
  $$
  { \alpha \choose \beta}={\tilde{\alpha} \choose \beta}.
  $$ 
  Therefore 
  $$
  C=\sum_{\rho=1}^d \sum_{\begin{subarray}{l} 2 \leq |\beta| \\ e_i \leq \beta \leq \tilde{\alpha} \end{subarray}} { \tilde{\alpha} \choose \beta} \D^\beta c^\rho \D^{\tilde{\alpha}- \beta} \D_\rho u(x+c)
  $$
  $$
  +\sum_{\rho=1}^d \sum_{\begin{subarray}{l}\beta \neq 0 \\ \beta \leq \tilde{\alpha} \\ \beta_i=0 \end{subarray}} { \tilde{\alpha} \choose \beta } \D^\beta c^\rho \D_\rho \D^{\tilde{\alpha}- \beta}  u(x+c) 
  $$
  \begin{equation}                              \label{eq: B}
 +\sum_{\rho=1}^d { \alpha \choose e_i } \D_i c^\rho \D^\alpha \D_\rho u(x+c) +v.
\end{equation}
Consequently, by summing \eqref{eq: B first} and \eqref{eq: B} we obtain \eqref{eq: desired eq}. This finishes the proof.
 \end{proof}
 For a multi-index $\alpha$, with $|\alpha|=n$,  and a function $u \in H^{n+2}$, let us define the quantity
  
$$
 \mathscr{G}_t^\alpha (u):= 
  \int_Z \| \D^\alpha \left( u(x+c_t(x,z))-u(x) \right) \|^2_0 \mu(dz).
$$
$$
+ 2\left(\int_Z \D^\alpha\left( u(x+c_t(x,z))-u(x)-c_t(x,z)\cdot \nabla u(x)\right) \mu(dz),  \D^\alpha u(x)  \right)_0
  $$
 \begin{lemma}
 Let Assumption \ref{as:regularity of c} hold. Then there exists a constant $N$ depending only on $K,d$ and $m$, such that for any $u\in H^{m+2}$, and for any multi-index $\alpha$, with $|\alpha|\leq m$, we have
 \begin{equation}                          \label{eq: main estimate}
 \mathscr{G}_\alpha(u) \leq N \|u\|^2_m.
 \end{equation}
 \end{lemma}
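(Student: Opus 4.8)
The plan is to first collapse the two terms defining $\mathscr{G}_\alpha(u)$ into a single integral in ``increment'' form, and only then expand. Write $w:=\D^\alpha u$, $P:=\D^\alpha\!\big(u(x+c)\big)$ and $G:=\D^\alpha\!\big(c\cdot\nabla u\big)$. Expanding the square in the first term of $\mathscr{G}_\alpha$ as $\int(P-w)^2\,dx$ and adding the cross term $2(\int_Z(P-w-G)\,\mu(dz),w)_0$, the $Pw$ contributions cancel and the $w^2$ terms combine, leaving, after Fubini,
$$\mathscr{G}_\alpha(u)=\int_Z\int_{\bR^d}\big(P^2-w^2-2Gw\big)\,dx\,\mu(dz).$$
It is essential to perform this cancellation under a common $\int_Z$ before estimating, because the separate pieces $\int_Z\|P\|_0^2\,\mu(dz)$ diverge when $\mu(Z)=\infty$; only the combination is $\mu$-integrable.

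Next I would substitute the expansions from Lemma \ref{lem: form composition} and the Leibniz rule, namely $P=w(x+c)+R+u^{(\alpha)}$ and $G=c\cdot\nabla w+\tilde R$, where $R=\sum_\rho\sum_{0\neq\beta\leq\alpha}\binom{\alpha}{\beta}\D^\beta c^\rho\,(\D^{\alpha-\beta}\D_\rho u)(x+c)$, $\tilde R$ is the same sum with the $u$-derivatives evaluated at $x$, and $u^{(\alpha)}\in\mathscr{D}_n(u,F,m)$. Using $2(c\cdot\nabla w)w=c\cdot\nabla(w^2)$ this yields $\mathscr{G}_\alpha(u)=\mathrm{I}+\mathrm{II}+\mathrm{III}-\mathrm{IV}$ with $\mathrm{I}=\int_Z\int\big(w(x+c)^2-w^2-c\cdot\nabla(w^2)\big)\,dx\,\mu(dz)$, $\mathrm{II}=2\int_Z\int w(x+c)\,(R+u^{(\alpha)})\,dx\,\mu(dz)$, $\mathrm{III}=\int_Z\|R+u^{(\alpha)}\|_0^2\,\mu(dz)$ and $\mathrm{IV}=2\int_Z\int\tilde R\,w\,dx\,\mu(dz)$. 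The principal term $\mathrm{I}$ is exactly of the form handled by Lemma \ref{lem: L1 estimate} applied to $v=w^2=(\D^\alpha u)^2$, which lies in $W^2_1$ since $u\in H^{m+2}$ and $|\alpha|\leq m$; hence $\mathrm{I}\leq N\|w^2\|_{L_1}=N\|\D^\alpha u\|_0^2\leq N\|u\|_m^2$, with no loss of derivatives. This is where the degeneracy is absorbed.

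The contributions carrying two factors of $\bar c$ I would then bound routinely. Via Remark \ref{rem: |v|, |v|2} --- using $|u^{(\alpha)}|\leq N|\bar c|^2\sum_{|\zeta|\leq n}|\D^\zeta u(x+c)|$ when pairing the $u^{(\alpha)}$ part of $\mathrm{II}$ against the $O(1)$ factor $w(x+c)$, and $|u^{(\alpha)}|^2\leq N|\bar c|^2\sum_{|\zeta|\leq n}|\D^\zeta u(x+c)|^2$ inside $\mathrm{III}$ --- together with the change of variables $y=x+c$ and the Jacobian bounds of Assumption \ref{as:regularity of c}, these are all dominated by $N\|u\|_m^2\int_Z|\bar c|^2\,\mu(dz)$, which is finite because $\bar c\in L_2(Z,\mu)$; note that every $u$-derivative appearing is of order $\leq m$.

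The genuine difficulty is the surviving single-$\bar c$ piece: the $R$-part of $\mathrm{II}$ together with $\mathrm{IV}$. Each carries only one factor $\D^\beta c\sim\bar c$, which is not $\mu$-integrable over the small jumps $\{\bar c\leq1\}$, and a naive Taylor expansion of the increment would cost an extra derivative, producing a forbidden $\|u\|_{m+1}$. The plan is to pair them: since the coefficients $\binom{\alpha}{\beta}\D^\beta c^\rho(x)$ coincide, this equals $2\int_Z\sum_{\rho,\beta}\binom{\alpha}{\beta}\int\D^\beta c^\rho(x)\big(\Psi_{\beta\rho}(x+c)-\Psi_{\beta\rho}(x)\big)\,dx\,\mu(dz)$, where $\Psi_{\beta\rho}:=\D^\alpha u\cdot\D^{\alpha-\beta}\D_\rho u$ is a product of derivatives of order $\leq m$. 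Changing variables $y=x+c$ in the composed term, with $J$ the inverse of $x\mapsto x+c_t(x,z)$, the inner integral becomes $\int\Psi_{\beta\rho}(y)\big(\D^\beta c^\rho(J(y))\,|\det DJ(y)|-\D^\beta c^\rho(y)\big)\,dy$, and the key point is that the bracket is $O(|\bar c|^2)$: since $|J(y)-y|=|c(J(y),z)|\leq\bar c$, the mean value theorem and the bound $|\D^\gamma c|\leq\bar c$ for $|\gamma|\leq m+1$ give $\big|\D^\beta c^\rho(J(y))-\D^\beta c^\rho(y)\big|\leq N|\bar c|^2$, while $\big|\,|\det DJ(y)|-1\,\big|\leq N\bar c$ by linearizing $\det(\mathbb{I}+Dc)$ (and is trivially $\leq N\leq N|\bar c|^2$ on $\{\bar c>1\}$), so that $\D^\beta c^\rho(J)\big(|\det DJ|-1\big)=O(|\bar c|^2)$ as well. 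Because no derivative of $\Psi_{\beta\rho}$ is taken and $\|\Psi_{\beta\rho}\|_{L_1}\leq\|u\|_m^2$, this bounds $\mathrm{II}_R-\mathrm{IV}$ by $N\|u\|_m^2\int_Z|\bar c|^2\,\mu(dz)$, and collecting all terms gives \eqref{eq: main estimate}. I expect this cancellation --- replacing a naive increment estimate, which both loses a derivative and leaves a non-integrable single power of $\bar c$, by a change-of-variables comparison of the coefficient at the two base points --- to be the crux of the whole argument.
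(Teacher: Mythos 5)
Your proposal is correct and follows essentially the same route as the paper: the same reduction of $\mathscr{G}_\alpha(u)$ to the single integral $\int_Z\int_{\bR^d}(P^2-w^2-2Gw)\,dx\,\mu(dz)$, the same decomposition via Lemma \ref{lem: form composition} and the Leibniz rule, the same application of Lemma \ref{lem: L1 estimate} to $v=(\D^\alpha u)^2$ for the principal term, and the same crucial pairing of the two single-$\bar{c}$ pieces (your $\mathrm{II}_R-\mathrm{IV}$ is exactly the paper's $A_1$). The one point where you diverge is the estimate of that paired term: you change variables $y=x+c$ and bound the coefficient difference $\D^\beta c^\rho(J(y))|\det DJ(y)|-\D^\beta c^\rho(y)$ pointwise by $N|\bar{c}|^2$ via the mean value theorem, whereas the paper writes $\Psi(x+c)-\Psi(x)$ as $\int_0^1\nabla\Psi(T_\theta(x))\cdot c\,d\theta$, changes variables, and integrates by parts to put both factors of $c$ on the coefficient --- both yield the same $|\bar{c}(z)|^2\|u\|_m^2$ bound, so this is a harmless technical variant.
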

 \begin{proof}
 A simple calculation shows that 
\begin{equation}                                \label{eq: integral G alpha}
 \mathscr{G}_\alpha(u)=\int_{\bR^d} \int_Z [\D^\alpha \left( u(x+c)\right)]^2-[\D^\alpha u(x)]^2-2\D^\alpha[c\nabla u(x)] \D^\alpha u(x) \mu(dz) dx.
 \end{equation}

 By Lemma \ref{lem: form composition} we have 
$$
[\D^\alpha \left( u(x+c)\right)]^2=[\D^\alpha u(x+c)]^2
$$
$$
=2\D^\alpha u(x+c)\sum_{\rho=1}^d \sum_{\begin{subarray}{l}\beta \neq 0 \\ \beta \leq \alpha  \end{subarray}}  {\alpha \choose \beta } \D^\beta c^\rho \D^{\alpha- \beta} \D_\rho u(x+c) +2\D^\alpha u(x+c) v 
$$
 \begin{equation}                             \label{eq: comp squared}
 \left(\sum_{\rho=1}^d \sum_{\begin{subarray}{l}\beta \neq 0 \\ \beta \leq \alpha  \end{subarray}}  {\alpha \choose \beta } \D^\beta c^\rho \D^{\alpha- \beta} \D_\rho u(x+c) + v \right)^2 ,
 \end{equation}
 where $v\in \mathscr{D}_n(u,F,m)$ for some functions $F,m$ depending only on $\alpha$.
We also have
$$
2\D^\alpha[c\nabla u(x)] \D^\alpha u(x)= c \cdot \nabla (\D^\alpha u(x))^2
$$
\begin{equation}
+ 2\D^\alpha u(x)\sum_{\rho=1}^d \sum_{\begin{subarray}{l}\beta \neq 0 \\ \beta \leq \alpha  \end{subarray}}  {\alpha \choose \beta } \D^\beta c^\rho \D^{\alpha- \beta} \D_\rho u(x)
\end{equation}
Hence the integrant in \eqref{eq: integral G alpha} is equal to 
\begin{equation}                           \label{eq: integrant}
[\D^\alpha u(x+c)]^2-[\D^\alpha u(x)]^2- c \cdot \nabla (\D^\alpha u(x))^2+A_1+A_2 
\end{equation}
where
$$
A_1=2\sum_{\rho=1}^d \sum_{\begin{subarray}{l}\beta \neq 0 \\ \beta \leq \alpha  \end{subarray}}  {\alpha \choose \beta } \D^\beta c^\rho[ \D^{\alpha- \beta}\D_\rho u(x+c) \D^\alpha u(x+c)-\D_\rho \D^{\alpha- \beta} u(x) \D^\alpha u(x)]
$$
and 
$$
A_2=\left(\sum_{\rho=1}^d \sum_{\begin{subarray}{l}\beta \neq 0 \\ \beta \leq \alpha  \end{subarray}}  {\alpha \choose \beta } \D^\beta c^\rho \D^{\alpha- \beta} \D_\rho u(x+c) + v \right)^2 +2\D^\alpha u(x+c) v .
$$
By Taylor's formula we have
$$
\int_{\bR^d} \D^\beta c^\rho [\D_\rho u(x+c) \D^\alpha u(x+c)-\D_\rho u(x) \D^\alpha u(x)]dx
$$
$$
=\int_0^1\sum_{i=1}^d \int_{\bR^d}\D_i (\D_\rho\D^{\alpha- \beta} u \D^\alpha u )(T_{t,z,\theta}(x)) c^i_t(x,z) \D^\beta c^\rho_t(x,z)  dxd\theta
$$
$$
=\int_0^1\sum_{i=1}^d \int_{\bR^d}\D_i (\D_\rho\D^{\alpha- \beta} u \D^\alpha u )(x) c^i_t(J_{t,z,\theta}(x),z) \D^\beta c^\rho_t(J_{t,z,\theta}(x),z)
$$
$$
\times |detD J_{\theta,t,z}(x)|  dxd\theta
\leq N |\bar{c}(z)|^2 \|u\|_m^2
$$
where for  the last  inequality we have used integration by parts, Assumption \ref{as:regularity of c}, and H\"older's inequality. Therefore
\begin{equation}                               \label{eq: A1}
\int_{Z}\int_{\bR^d} A_1 dx dz \leq N  \|u\|_m^2.
\end{equation}
It is also easy to see that 
$$
|A_2| \leq N\left(|\bar{c}(z)|^2 \sum_{\rho=1}^d \sum_{\begin{subarray}{l}\beta \neq 0 \\ \beta \leq \alpha  \end{subarray}} |\D^{\alpha- \beta} \D_\rho u(x+c)|^2+|v|^2+|\D^\alpha u(x+c)||v|\right),
$$
and by Remark \ref{rem: |v|, |v|2} we obtain
$$
|A_2| \leq N|\bar{c}(z)|^2 \left(\sum_{|\zeta |\leq n} |\D^\zeta u(x+c)|^2+|\D^\alpha u(x+c)|\sum_{|\zeta |\leq n} |\D^\zeta u(x+c)|\right).
$$
Consequently,
\begin{equation}                                    \label{eq: A2}
\int_{ Z} \int_{\bR^d} |A_2| dx \mu(dz)\leq N  \|u\|_m^2.
\end{equation}
Therefore by integrating \eqref{eq: integrant} over $\bR^d$ and $Z$, Lemma \ref{lem: L1 estimate} combined with \eqref{eq: A1} and \eqref{eq: A2}, leads to \eqref{eq: main estimate}.
 \end{proof}

 \begin{assumption}                            \label{as: non-de regularity}
$$ $$
\begin{itemize}
\item[(i)] The functions $a^{ij}, \sigma^{ik}, c$ are $m$ times continuously differentiable in $x$,  and their derivatives up to order $m$ are bounded in magnitude by $K$. Moreover for any multi-index $\alpha$, with $|\alpha| \leq m$, 
 $$
 |\D^\alpha c_t(x,z)| \leq |\bar{c}(z)| \wedge K
 $$
 \item[(ii)] For any $(\omega,t,x,z,\theta) \in \Omega \times [0,T] \times\bR^d \times Z \times [0,1],$
 $$
 K^{-1} \leq |det(\mathbb{I}+ \theta D c_t(x,z))|
 $$
 \item[(iii)] $f \in \mathfrak{H}^{m-1}, \ g \in  \mathfrak{H}^m(l_2)$, $h \in \mathfrak{H}^m(L_2(Z))$ and $\psi \in \mathbb{H}^m$.
 \end{itemize} 
 \end{assumption}
 \begin{assumption}                          \label{as: non-de parabolicity}
 There exists a constant $\lambda>0$, such that for any $\xi \in \mathbb{R}^d$, and for all $\omega,t$ and $x$, we have
 $$
 \sum_{i,j=1}^da^{ij}_t(x)\xi_i \xi_j  -\sum_{i,j=1}^d \sum_{k=1}^\infty \frac{1}{2}  \sigma^{ik}_t(x)\sigma ^{jk}_t(x)\xi_i \xi _j 
\geq \ \lambda |\xi |^2.
 $$
 \end{assumption}
 The following theorem is a consequence of Theorems 2.9-2.10 from \cite{G3}.
 \begin{theorem}
 Under Assumptions \ref{as: non-de regularity}-\ref{as: non-de parabolicity}, there exists a unique solution $u$ of equation \eqref{eq:main equation}. Moreover, $u$ is a c\'adl\'ag processes with values in $H^m$, it belongs to $H^{m+1}$ for $dP \times dt$ almost every $(\omega,t)$, and the following estimate holds
$$
 E\sup_{t\leq T} \|u_t\|^2_m+E\int_0^T \|u_t\|^2_{m+1} dt
 $$
  \begin{equation}                          \label{eq: main estimate non-de}
  \leq NE\|\psi\|^2_m+NE\int_0^T\left( \|f_t\|^2_{m-1}+\|g_t\|^2_m+\int_Z\|h_t(z)\|_m^2 \mu(dz) \right) dt,
 \end{equation}
 where $N$ is a constant depending only on $m,d,T,K$ and $\lambda$.
 \end{theorem}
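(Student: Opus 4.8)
The plan is to recast equation \eqref{eq:main equation} as an abstract stochastic evolution equation in the variational (Gelfand triple) framework and to verify that it satisfies the hypotheses of Theorems 2.9--2.10 of \cite{G3}, which then yield the assertion directly. I would work with the triple $H^{m+1}\hookrightarrow H^m\hookrightarrow H^{m-1}$ and write the drift as $\cA_t(v)=L_tv+I_tv+f_t$, the diffusion coefficients as $\cB^k_t(v)=M^k_tv+g^k_t$, and the jump coefficient as $\cC_t(z,v)=H_t(z)v+h_t(z)$. Under Assumption \ref{as: non-de regularity} the coefficients and their relevant derivatives are bounded, so $L_t$ and $M^k_t$ map the triple continuously by the usual order count; writing $I_t$ in the Taylor--remainder form of Lemma \ref{lem: L1 estimate} shows that $I_t\colon H^{m+1}\to H^{m-1}$ is bounded as well. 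The measurability and linearity requirements of \cite{G3} are immediate from the $\mathscr{P}\otimes\mathscr{B}(\bR^d)$-measurability of the coefficients, so the only substantive condition to check is coercivity.

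The coercivity (stochastic parabolicity) inequality demanded by \cite{G3} reads, for all $u\in H^{m+2}$,
\begin{align*}
2(L_tu+I_tu,u)_m&+\sum_{k=1}^\infty\|M^k_tu\|_m^2+\int_Z\|H_t(z)u\|_m^2\,\mu(dz)\\
&\leq -\lambda\|u\|_{m+1}^2+N\|u\|_m^2,
\end{align*}
after which the free terms $f_t,g_t,h_t$ are incorporated by Young's inequality to produce the data on the right-hand side of \eqref{eq: main estimate non-de}. To establish the display I would apply $\D^\alpha$ for each $|\alpha|\leq m$ and sum. The local part, arising from $2(L_tu,u)_m+\sum_k\|M^k_tu\|_m^2$, is treated by integration by parts: its top-order contribution is
\[
-\sum_{|\alpha|\le m}\int_{\bR^d}\Big(2\sum_{i,j=1}^da^{ij}_t-\sum_{i,j=1}^d\sum_{k=1}^\infty\sigma^{ik}_t\sigma^{jk}_t\Big)\D_i\D^\alpha u\,\D_j\D^\alpha u\,dx,
\]
which by Assumption \ref{as: non-de parabolicity} is at most $-2\lambda\sum_{|\alpha|\le m}\|\nabla\D^\alpha u\|_0^2\le -\lambda\|u\|_{m+1}^2+N\|u\|_m^2$, while every commutator generated by moving $\D^\alpha$ past the coefficients is of order at most $m$ and is absorbed into $N\|u\|_m^2$. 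The nonlocal part, arising from $2(I_tu,u)_m+\int_Z\|H_t(z)u\|_m^2\,\mu(dz)$, equals $\sum_{|\alpha|\le m}\mathscr{G}^\alpha_t(u)$, which by estimate \eqref{eq: main estimate} is bounded by $N\|u\|_m^2$.

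The main obstacle is exactly this coercivity bound, and within it the treatment of the nonlocal operators: $I_t$ is formally of the same (second) order as $L_t$, and the jump increment $H_t(z)u$ need not be small, so a priori these terms could destroy parabolicity. This is precisely the difficulty resolved by estimate \eqref{eq: main estimate}, which shows that the combined contribution of the integral operator and the Poisson jumps to the $H^m$-energy is of lower order; hence the strict parabolicity of Assumption \ref{as: non-de parabolicity} is preserved. With coercivity and the growth bounds in hand, Theorems 2.9--2.10 of \cite{G3} supply existence and uniqueness of the solution $u$, its c\`adl\`ag property in $H^m$, the membership $u\in H^{m+1}$ for $dP\times dt$-almost every $(\omega,t)$, and the estimate \eqref{eq: main estimate non-de}.
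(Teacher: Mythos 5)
The paper offers no proof of this statement beyond the single sentence that it ``is a consequence of Theorems 2.9--2.10 from \cite{G3}'', so your proposal --- recasting \eqref{eq:main equation} as a stochastic evolution equation in the triple $H^{m+1}\hookrightarrow H^{m}\hookrightarrow H^{m-1}$ and verifying the hypotheses of those theorems, with coercivity as the only substantive point --- is exactly the intended route, and your identification of the nonlocal contribution to the coercivity form with $\sum_{|\alpha|\le m}\mathscr{G}^\alpha_t(u)$ is the right observation.

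One step needs repair. You invoke estimate \eqref{eq: main estimate} to bound $\sum_{|\alpha|\le m}\mathscr{G}^\alpha_t(u)$ by $N\|u\|_m^2$, but that estimate is proved under Assumption \ref{as:regularity of c}, which controls derivatives of $c$ up to order $m+1$ by $\bar c(z)\wedge K$ with $\bar c\in L_2(Z,\mu)$, whereas the present theorem assumes only Assumption \ref{as: non-de regularity}, which controls derivatives of $c$ up to order $m$. The extra derivative is genuinely used: both in Lemma \ref{lem: L1 estimate} and in the bound on $A_1$, the key step is an integration by parts that throws a derivative onto expressions of the form $c^i_t(J_{t,\theta,z}(x),z)\,\D^\beta c^\rho_t(J_{t,\theta,z}(x),z)\,|det\,DJ_{t,\theta,z}(x)|$ with $|\beta|$ up to $m$, hence requires derivatives of $c$ of order $m+1$. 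Under the weaker assumption you should instead aim for $\sum_{|\alpha|\le m}\mathscr{G}^\alpha_t(u)\le\varepsilon\|u\|_{m+1}^2+N_\varepsilon\|u\|_m^2$, which is all the nondegenerate setting requires and which one obtains by forgoing that last integration by parts and absorbing the resulting $O(\|u\|_{m+1}\|u\|_m)$ terms, via Young's inequality, into the $-\lambda\|u\|_{m+1}^2$ supplied by Assumption \ref{as: non-de parabolicity}; this reworking should be carried out (it is not entirely automatic, e.g. for the leading Taylor remainder when $m=1$), or else one should note that in the paper's only application of this theorem (the approximation argument of Section 4) the stronger Assumption \ref{as:regularity of c} is in force anyway. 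The same caveat applies to your claim that the commutators in the local part are ``of order at most $m$'': terms such as $\D^\beta a^{ij}\,\D_{ij}\D^{\alpha-\beta}u$ with $|\beta|=1$ involve $m+1$ derivatives of $u$ and must likewise be handled by Young's inequality against the coercive term rather than absorbed directly into $N\|u\|_m^2$.
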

 \section{Proof of Theorem \ref{thm: main theorem}}
 \begin{proof}
 First we assume that the functions $a^{ij}$ are smooth, and we replace them by $a^{ij}(\varepsilon):= a^{ij}+\varepsilon\delta_{ij}$, for some $\varepsilon>0$. Then the modified equation has a unique solution $u^\varepsilon\in \mathfrak{H}^{m+2}$, which is c\'adl\'ag in $H^{m+1}$. Then for a multi-index $\alpha$, with $|\alpha| \leq m$ we can differentiate the equation, use It\^o's formula for the square of the $L_2(\bR^d)-$norm (see \cite{G3}) and sum over all $|\alpha| \leq m$, to obtain
 $$
\|u^\varepsilon_t\|^2_m= \|\psi\|^2_m+\int_0^t I(m,u^\varepsilon_s,f_s,g_s)+\sum_{|\alpha| \leq m} \mathscr{G}^\alpha_s(u^\varepsilon)  ds 
 $$
 $$
 +2\sum_{|\alpha| \leq m} \int_0^t \int_Z\left(\D^\alpha H_s(z)u^\varepsilon_s, \D^\alpha h_s(z) \right)_0 +\|\D^\alpha h_s(z)\|^2_0 \mu(dz) ds
 $$
 $$
 +2\sum_{k=1}^ \infty \int_0^t (u^\varepsilon_s, M^ku^\varepsilon_s+ g^k_s)_m \ dw^k_s
 $$
 $$
 +\sum_{|\alpha| \leq m} \int_0^t \int_Z\|\D^\alpha(u^\varepsilon(x+c))\|_0^2-\|\D^\alpha u^\varepsilon (x)\|_0^2  \tilde{N}(dz,ds).
 $$
 $$
 +\sum_{|\alpha| \leq m} \int_0^t \int_Z 2 (\D^\alpha(u^\varepsilon_s(x+c)), \D^\alpha h_s(z))_0+\|\D^\alpha h_s(z)\|^2_0 \tilde{N}(dz,ds),
 $$
 where the expression $I(m,u^\varepsilon_s,f_s,g_s)$ is defined in \cite{KR}, and by virtue of Lemma 2.1 of the same article it satisfies, 
 $$
 I(m,u^\varepsilon_s,f_s,g_s) \leq N(\|u^\varepsilon_s\|_m^2+\|f_s\|_m^2+\|g_s\|_{m+1}^2).
 $$
For the third term of the right hand side of the above inequality, by virtue of lemma \ref{lem: form composition},   we have  
$$
(\D^\alpha H_s(z)u^\varepsilon_s, \D^\alpha h_s(z) )_0=(\D^\alpha u^\varepsilon _s(x+c)- \D^\alpha u^\varepsilon_s(x), \D^\alpha h_s(z))_0
$$
$$
+\sum_{|\alpha|\leq m} \sum_{\rho=1}^d \sum_{\begin{subarray}{l}\beta \neq 0 \\ \beta \leq \alpha  \end{subarray}}  {\alpha \choose \beta } (\D^\beta c^\rho \D^{\alpha- \beta} \D_\rho u^\varepsilon (x+c)+u^{^\varepsilon (\alpha)}, \D^\alpha h_s(z))_0
$$
$$
\leq (\D^\alpha u^\varepsilon _s(x+c)- \D^\alpha u^\varepsilon_s(x), \D^\alpha h_s(z))_0
$$
$$
+ N|\bar{c}(z)|^2\|u^\varepsilon_s\|^2_m+ N\|h_s(z)\|^2_m.
$$
Then we notice that, 
 $$
 (\D^\alpha u^\varepsilon _s(x+c)- \D^\alpha u^\varepsilon_s(x), \D^\alpha h_s(z))_0
 $$
 $$
\sum_{i=1}^d  \int_0^1 \int_{\bR^d} \D_i \D^\alpha u^\varepsilon_s(T_{s,\theta ,z}(x))c^i_s(x,z) \D^\alpha h_s(z,x) dx d\theta
 $$
 $$
 -\sum_{i=1}^d  \int_0^1 \int_{\bR^d} \D_i \D^\alpha u^\varepsilon_s(x)\D_i (c^i_s(J_{s,\theta ,z}(x),z)
 $$
 $$
 \times \D^\alpha h_s(z,J_{s,\theta ,z}(x)) |det DJ_{s,\theta ,z}(x)|)dx d\theta
 $$
 $$
 \leq N|\bar{c}(z)|^2\|u^\varepsilon_s\|^2_m+N \|h_s(z)\|^2_{m+1}.
 $$
 By using this, and \eqref{eq: main estimate}, we obtain
 $$
E\sup_{t \leq r} \|u^\varepsilon_t\|^2_m\leq  \|\psi\|^2_m+N \int_0^r  E \|u^\varepsilon_t\|^2_m dt 
$$
$$
+NE\int_0^T \left(\|f_t\|^2_m+\|g_t\|^2_{m+1}+\int_Z \|h_t(z)\|^2_{m+1} \mu(dz)\right) dt
 $$
 $$
 +2E\sup_{t \leq r}\big|\sum_{k=1}^ \infty \int_0^t (u^\varepsilon_s, M^ku^\varepsilon_s+ g^k_s)_m \ dw^k_s\big|
 $$
 $$
 +E\sup_{t \leq r}\big|\sum_{|\alpha| \leq m} \int_0^t \int_Z\|\D^\alpha(u^\varepsilon_s(x+c))\|_0^2-\|\D^\alpha u^\varepsilon_s (x)\|_0^2  \tilde{N}(dz,ds) \Big|
 $$
 $$
 +E\sup_{t \leq r}\big|\sum_{|\alpha| \leq m} \int_0^t \int_Z  (\D^\alpha(u^\varepsilon_s(x+c)), \D^\alpha h_s(z))_0\tilde{N}(dz,ds) \Big|
 $$
 \begin{equation}                                \label{eq: after Ito's}
 +E\sup_{t \leq r}\big| \int_0^t \int_Z \| h_s(z)\|^2_m \tilde{N}(dz,ds)\Big| .
 \end{equation}
 Then as in \cite{KR}, we have for any $\delta>0$
$$
 2E\sup_{t \leq r}\big|\sum_{k=1}^ \infty \int_0^t (u^\varepsilon_s, M^ku^\varepsilon_s+ g^k_s)_m \ dw^k_s\big|
 $$
 $$
 \leq \delta E\sup_{t \leq r} \|u^\varepsilon_t\|^2_m+ 
 N\int_0^r (E\|u^\varepsilon_t\|^2_m+E\|g\|^2_{m+1} )dt,
 $$
 where $N=N(d,K,m,\delta)$. By virtue of Lemma \eqref{lem: form composition}  and Remark \ref{rem: |v|, |v|2},  we have, 
 $$
 \|\D^\alpha(u^\varepsilon_s(x+c))\|_0^2-\|\D^\alpha u^\varepsilon_s (x)\|_0^2 
 $$
 $$
 \leq \|\D^\alpha u^\varepsilon_s(x+c)\|_0^2-\|\D^\alpha u^\varepsilon_s (x)\|_0^2 +N|\bar{c}(z)|\|u^\varepsilon_s\|_m^2.
 $$
 Then we have
 $$
 \|\D^\alpha u^\varepsilon_s(x+c)\|_0^2-\|\D^\alpha u^\varepsilon_s (x)\|_0^2
 $$
 $$
 =\sum_{i=1}^d \int_0^1 \int_{\bR^d}2\D^\alpha u^\varepsilon_s(T_{t,\theta,z}(x))  \D_i \D^\alpha u^\varepsilon_s(T_{t,\theta,z}(x)) c^i_t(x,z) dx d\theta
 $$
 $$
 =-\sum_{i=1}^d\int_0^1 \int_{\bR^d} [\D^\alpha u^\varepsilon_s(x)]^2  \D_i \left( c^i_t(J_{t,\theta,z}(x),z)|det DJ_{t,\theta,z}(x)| \right) dx d\theta
 $$
 $$
 \leq N |\bar{c}(z)| \|u^\varepsilon_s\|^2_m.
 $$
 Hence, by the Burkholder-Gundy-Davis inequality, and Young's inequality, we get for any $\delta>0$, 
  $$
 +E\sup_{t \leq r}\big|\sum_{|\alpha| \leq m} \int_0^t \int_Z\|\D^\alpha(u^\varepsilon_s(x+c))\|_0^2-\|\D^\alpha u^\varepsilon_s (x)\|_0^2  \tilde{N}(dz,ds) \Big|
 $$
$$
\leq \sum_{|\alpha| \leq m}  E\left( \int_0^r \int_Z \left(\|\D^\alpha(u^\varepsilon_s(x+c))\|_0^2-\|\D^\alpha u^\varepsilon_s (x)\|_0^2\right)^2 \mu(dz) ds \right)^{1/2}
$$
 $$
 \leq \delta E\sup_{t \leq r} \|u^\varepsilon_t\|^2_m+ 
 N\int_0^r E\|u^\varepsilon_t\|^2_mdt,
 $$
 where $N=N(d,K,m,\delta)$.   Since 
 $$
 (\D^\alpha(u^\varepsilon_s(x+c)), \D^\alpha h_s(z))_0\leq N\|u^\varepsilon_s\|_m\|h_s(z)\|_m,
 $$
 by the Burkholder-Gundy-Davis inequality, and Young's inequality,  the term in the fifth line of \eqref{eq: after Ito's} can be estimated by 
 $$
 \delta E\sup_{t \leq r} \|u^\varepsilon_t\|^2_m+ 
 N\int_0^r \int_Z E\|h_t(z)\|^2_m\mu(dz)dt.
 $$
 Also the last term in \eqref{eq: after Ito's} can be estimated by 
 $$
 2E \int_0^T \int_Z \|h_t(z)\|^2_m \mu(dz)dt. 
 $$
 Combining these estimates we get
 $$
E\sup_{t \leq r} \|u^\varepsilon_t\|^2_m\leq N \|\psi\|^2_m+N \int_0^r  E \|u^\varepsilon_t\|^2_m dt 
$$
$$
+NE\int_0^T \left(\|f_t\|^2_m+\|g_t\|^2_{m+1}+\int_Z \|h_t(z)\|^2_{m+1} \mu(dz)\right) dt< \infty ,
 $$
 From which, by virtue of Gronwall's lemma we obtain \eqref{eq: estimate for the solution} for $u^\varepsilon$, assuming that $a^{ij}$ are smooth. For the general case, we  mollify $a^{ij}$ to obtain $a^{ij(n)}$, and let us call $u^{\varepsilon (n)}$ the solution of \eqref{eq:main equation}, with $a^{ij}$ replaced by $a^{ij(n)}+ \varepsilon \delta_{ij}$, where $\delta_{ij}$ is the Kronecker delta. Then we have that estimate \eqref{eq: estimate for the solution} holds for $u^{\varepsilon (n)}$. Also, the difference $u^{\varepsilon (n)}-u^\varepsilon$ satisfies 
 \eqref{eq:main equation}, with $a^{ij}$ replaced by $a^{ij(n)}+ \varepsilon \delta_{ij}$, Assumptions \ref{as: non-de regularity}- \ref{as: non-de parabolicity} in force, the constants appearing there independent of $n\in \mathbb{N}$, and $f=( a^{ij(n)}- a^{ij}) \D_{ij}u^\varepsilon$, $g=0$, $h=$ and $\psi=0$. Hence, by \eqref{eq: main estimate non-de}, we obtain $E\sup_{t \leq T} \|u^\varepsilon_t-u^{\varepsilon (n)}_t\|^2_m \to 0$, as $n \to \infty$, which shows that estimate \eqref{eq: estimate for the solution} holds for $u^\varepsilon$ for the general case. Then, once estimate \eqref{eq: estimate for the solution} is obtained for $u^\varepsilon$, one can conclude the proof in the same way as in \cite{KR}, \cite{KR1}. One can find a sequence $\varepsilon_n\to 0$, and a function $u \in \mathfrak{H}^m$ such that $u^{\varepsilon_n}$ converges weakly to $u$, and $u$ is a solution of \eqref{eq:main equation}. It follows that $u$ is c\'adl\'ag in $H^{m-1}$.  Then we can take a sequence $u^n$ of convex combinations of $u^{\varepsilon_n}$ such that $\|u^n- u\|_m \to 0$ for almost every $(\omega,t) \in \Omega\times [0,T]$. Hence we can find $\mathbb{T}$,  a dense countable set of $[0,T]$ such that, almost surely, $\|u^n_t- u_t\|_m \to 0$ as $n \to \infty$, for all $t \in \mathbb{T}$. Let $\mathbb{L}$ be a countable dense subset of $H^m$ consisting of smooth functions with compact support. Since for any multi-index $\gamma$ of order $m$, and any $\phi \in \mathbb{L}$, the expression $( u_t, \D^\gamma\phi)_0$ is c\'adl\'ag in $t$, 
 we have that almost surely 
 $$
 \sup_{\phi \in \mathbb{L}} \sup_{t < T} \frac{( u_t, \D^\gamma \phi)_0}{\|\phi\|_0 } \leq  \sup_{\phi \in \mathbb{L}} \sup_{t \in \mathbb{T}} \frac{(u_t, \D^\gamma  \phi)_0 }{\|\phi\|_0 }
 $$
$$
   \leq \liminf_{n\to \infty}  \sup_{t \leq T} \|\D^\gamma u^n_t\|_0.
  $$
  The right hand side of of the above  inequalities is finite, which implies that almost surely $\D^\gamma u_t \in H^0$ for any $t< T$, and the following holds
  \begin{equation}                      \label{eq: norm of u in m}
  \sup_{t<T}\| \D^\gamma u_t\|_0 \leq \liminf_{n\to \infty}  \sup_{t \leq T} \|\D^\gamma u^n_t\|_0.
   \end{equation}
 For  $t=T$ we proceed similarly. By virtue of the main estimate, we can take a subsequence $\varepsilon_{n(k)}$ of $\varepsilon_n$, and a function $\bar{u}_T \in L_2(\Omega, \mathscr{F}_T; H^m)$ such that $u^{\varepsilon_{n(k)}}_T$ converges weakly to $\bar{u}_T $. Then for any $\phi \in C^\infty_c$, we have 
 $$
 (u^{\varepsilon_{n(k)}}_T, \phi)_0=R(T,a^{ij}+\varepsilon_{n(k)}\delta_{ij}, u^{\varepsilon_{n(k)}}, \phi).
 $$
 Since the integral and the stochastic integral are continuous linear operators from $\mathfrak{H}^0$ into $L_2(\Omega,\mathscr{F}_T)$, therefore weakly continuous, by letting $k \to \infty$ we obtain
 $$
 (\bar{u}_T, \phi)_0=R(T,a^{ij}, u, \phi)=(u_T,\phi)_0.
 $$
 Hence $u_T=\bar{u}_T$ almost surely. It also follows that 
 $$
 \|u_T\|_m \leq \liminf_{n \to \infty} \|u^n_T\|_m. 
 $$
 This combined with \eqref{eq: norm of u in m} and Fatou's lemma leads to estimate \eqref{eq: estimate for the solution} for $u$. To show that $u$ is weakly c\'adl\'ag we proceed as follows. We  have that for any $\phi\in \mathbb{L}$, the expression $(u_t,\phi)_m$ is c\'adl\'ag.  We also have that $\sup_{t\leq T}\|u_t\|_m< \infty$. It follows then that for any $v \in H^m$, $(u_t,v)_m$ is right continuous. To show the existence of left limits, we fix $t \in [0,T]$ and $v \in H^m$,  and let $v_k\in \mathbb{L}$, such that $\|v_k-v\|_m \to 0$. Then for any multi-index $\gamma$ with $|\gamma| = m$, we have
 $$
 \sup_k |(\D^{\gamma-e_i}u_{t-}, \D^{\gamma+e_i} v_k)_0| = \sup_k \lim_{t_n \uparrow t}|(\D^{\gamma-e_i}u_{t_n}, \D^{\gamma+e_i} v_k)_0|
 $$
 $$
  \leq \sup_{t \leq T} \|u_t\|_m \|v\|_m< \infty,
 $$
 for an appropriate $i \in \{1,...,d\}$.
 Hence there exists a subsequence $k(l)$ and $q \in \bR$ such that 
 $$
 -(\D^{\gamma-e_i}u_{t-}, \D^{\gamma+e_i} v_{k(l)})_0 \to q, \ \text{as} \ l \to \infty.
 $$
 We claim that for  any $\varepsilon>0$, there exists $\delta>0$, such that if $0<t-s< \delta$ then 
 $|q-(\D^{\gamma}u_s, \D^{\gamma} v)_0| \leq \varepsilon$.
 We have
 $$
 |q-(\D^{\gamma}u_s, \D^{\gamma} v)_0|  \leq |(\D^{\gamma}u_s, \D^{\gamma} v_{k(l)})_0-(\D^{\gamma}u_s, \D^{\gamma} v)_0|
 $$
$$
+|(\D^{\gamma-e_i}u_s, \D^{\gamma+e_i} v_{k(l)})_0-(\D^{\gamma-e_i}u_{t-}, \D^{\gamma+e_i} v_{k(l)})_0|
$$
$$
+|(\D^{\gamma-e_i}u_{t-}, \D^{\gamma+e_i} v_{k(l)})_0+q|
$$
$$
\leq \sup_{t \leq T} \|u_s\|_m\|v_{k(l)}-v\|_m
$$
$$
+|(\D^{\gamma-e_i}u_s, \D^{\gamma+e_i} v_{k(l)})_0-(\D^{\gamma-e_i}u_{t-}, \D^{\gamma+e_i} v_{k(l)})_0|
$$
$$
+|(\D^{\gamma-e_i}u_{t-}, \D^{\gamma+e_i} v_{k(l)})_0+q|
$$
Hence one can take $l$ large enough and then choose a sufficiently small $\delta$. This finishes the proof.
 \end{proof}
 \section*{Acknowledgement}
 The author would like to express  gratitude towards his Phd advisor, Professor Istv\'an Gy\"ongy, for his guidance and his useful suggestions.

\end{document}